%
%
%

\documentclass[graybox]{svmult}


\usepackage{mathptmx}       
\usepackage{helvet}         
\usepackage{courier}        
\usepackage{type1cm}        
%
\usepackage{makeidx}         
\usepackage{graphicx}        
\usepackage{multicol}        
\usepackage[bottom]{footmisc}
\usepackage{latexsym,amsmath,amssymb,amsfonts}

\makeindex             

\newcommand{\R}{\mathbb{R}}


\begin{document}

\title*{Fractional variational calculus\\
for non-differentiable functions}

\author{Agnieszka B. Malinowska}

\institute{Agnieszka B. Malinowska \at Faculty of Computer Science, Bia{\l}ystok University of Technology,
15-351 Bia\l ystok, Poland, \email{abmalinowska@ua.pt}}
%
%
\maketitle

\abstract*{The fractional calculus of
variations is a subject under strong
research. Different definitions for fractional derivatives and
integrals are used, depending on the purpose under study.
The fractional operators in this paper are defined in the sense of Jumarie. 
This allows us to work with functions which are non-differentiable.
We present necessary and sufficient optimality
conditions for fractional problems of the calculus of variations
with a Lagrangian density depending on the free end-points.}

\abstract{The fractional calculus of
variations is a subject under strong
research. Different definitions for fractional derivatives and
integrals are used, depending on the purpose under study.
The fractional operators in this paper are defined in the sense of Jumarie. 
This allows us to work with functions which are non-differentiable.
We present necessary and sufficient optimality
conditions for fractional problems of the calculus of variations
with a Lagrangian density depending on the free end-points.}

\section{Introduction}
The Fractional Calculus (FC) is one of the most interdisciplinary
fields of mathematics, with many applications in physics and
engineering. The history of FC goes back more than three centuries,
when in 1695 the derivative of order $\alpha=1/2$ was described by
Leibniz. Since then, many different forms of fractional operators
were introduced: the Grunwald--Letnikov, Riemann--Liouville, Riesz,
and Caputo fractional derivatives \cite{Kilbas,Podlubny,samko}), and the more recent notions of
\cite{Cresson,Jumarie1,Klimek,Kolwankar}. FC is nowadays
the realm of physicists and mathematicians, who investigate the
usefulness of such non-integer order derivatives and integrals in
different areas of physics and mathematics (see, \textrm{e.g.},
\cite{Carpinteri,Hilfer,Kilbas}). It is a successful tool for
describing complex quantum field dynamical systems, dissipation, and
long-range phenomena that cannot be well illustrated using ordinary
differential and integral operators (see, \textrm{e.g.},
\cite{El-Nabulsi:Torres:2008,Hilfer,Klimek,rie}). Applications of FC
are found in classical and quantum mechanics, field
theories, variational calculus, and optimal control (see,
\textrm{e.g.},
\cite{El-Nabulsi:Torres:2007,Frederico:Torres2,Jumarie4}).

The calculus of variations is an old branch of optimization theory
that has many applications both in physics and geometry. Apart from
a few examples known since ancient times such as Queen Dido's
problem (reported in {\it The Aeneid} by Virgil), the problem of
finding optimal curves and surfaces has been posed first by
physicists such as Newton, Huygens, and Galileo. Their contemporary
mathematicians, starting with the Bernoulli brothers and Leibniz,
followed by Euler and Lagrange, invented the calculus of variations
of a functional in order to solve those problems. Fractional
Calculus of Variations (FCV) unifies the calculus of variations and
the fractional calculus, by inserting fractional derivatives into
the variational integrals. This occurs naturally in many problems of
physics or mechanics, in order to provide more accurate models of
physical phenomena. The FCV started in 1996 with the work of
\cite{rie}. Riewe formulated the problem of the calculus of
variations with fractional derivatives and obtained the respective
Euler--Lagrange equations, combining both conservative and
nonconservative cases. Nowadays the FCV is a subject under strong
research. Different definitions for fractional derivatives and
integrals are used, depending on the purpose under study.
Investigations cover problems depending on Riemann--Liouville
fractional derivatives (see, \textrm{e.g.},
\cite{Atanackovic,El-Nabulsi:Torres:2008,Frederico:Torres1,Tatiana:Spain2010}), the
Caputo fractional derivative (see, \textrm{e.g.},
\cite{AGRA,Baleanu1,Malinowska,ComCa}), the symmetric fractional
derivative (see, \textrm{e.g.}, \cite{Klimek}), the Jumarie
fractional derivative (see, \textrm{e.g.},
\cite{Almeida,al:ma:tor,Jumarie1,Jumarie4,Jumarie3,Jumarie5,Sidi}),
and others \cite{Ric:Del:09,Cresson,El-Nabulsi:Torres:2007}. For
applications of the fractional calculus of variations we refer the
reader to
\cite{El-Nabulsi:Torres:2008,Jumarie4,Klimek,Rabei2}.
Although the literature of FCV is already vast, much remains to be
done.

In this paper we study problems of FCV which are defined in terms of
the Jumarie fractional derivatives and integrals. The Euler--Lagrange
equations for such problems with and without constraints were
recently shown in \cite{Almeida}. Here we develop further the theory
by proving necessary optimality conditions for more general problems
of FCV with a Lagrangian that may also depend on the unspecified
end-points $y(a)$, $y(b)$. More precisely, the problem under our
study: to extremize a functional which is defined in terms of the
Jumarie fractional operators and having no constraint on $y(a)$
and/or $y(b)$. The novelty is the dependence of the integrand $L$ on
the a priori unknown final values $y(a)$, $y(b)$. The new natural
boundary conditions (\ref{new:bca})--(\ref{new:bcb}) have important
implications in economics (see \cite{Cruz} and the references
therein).

The paper is organized as follows. Section~\ref{sec1} presents the
necessary definitions and concepts of Jumarie's fractional calculus.
Our results are formulated, proved, and illustrated through examples
in Section~\ref{sec2}. Main results of the paper include necessary
optimality conditions with the generalized natural boundary
conditions (Theorem~\ref{Theo E-L1}) that become sufficient under
appropriate convexity assumptions (Theorem~\ref{suff}). We finish
with Section~\ref{sec3} of conclusions.

\section{Fractional Calculus}\label{sec1}
For an introduction to the classical fractional calculus we refer
the reader to \cite{Kilbas,miller,Podlubny,samko}. In this section
we briefly review the main notions and results from the recent
fractional calculus proposed by Jumarie
\cite{Jumarie1,Jumarie4,Jumarie3}. \begin{definition}
Let $f:[a,b]\to\mathbb R$ be a continuous function. The Jumarie fractional derivative of $f$
is defined by
\begin{equation*}
f^{(\alpha)}(t)
:=\frac{1}{\Gamma(-\alpha)}\int_0^t(t-\tau)^{-\alpha-1}(f(\tau)-f(a))\,d\tau,
\quad \alpha<0,
\end{equation*}
where $\Gamma(z)=\int_0^\infty t^{z-1}e^{-t}\, dt$. For positive
$\alpha$, one will set
\begin{equation*}
f^{(\alpha)}(t)=(f^{(\alpha-1)}(t))'
=\frac{1}{\Gamma(1-\alpha)}\frac{d}{dt}\int_0^t(t-\tau)^{-\alpha}(f(\tau)-f(a))\,d\tau,
\end{equation*}
for $0<\alpha<1$, and
\begin{equation*}
f^{(\alpha)}(t):=(f^{(\alpha -n)}(t))^{(n)}, \quad n\leq\alpha<n+1,
\quad n\geq 1.
\end{equation*}
\end{definition}

The Jumarie fractional derivative has the following property:
\begin{itemize}
\item{The $\alpha$th derivative of a constant is zero.}
\item{Assume that $0<\alpha\leq1$, then the Laplace transform\\
of $f^{(\alpha)}$ is
\begin{equation*}
\mathfrak{L}\{f^{(\alpha)}(t)\}=s^{\alpha}\mathfrak{L}\{f(t)\}-s^{\alpha-1}f(0).
\end{equation*}}
\item{
$(g(t)f(t))^{(\alpha)}=g^{(\alpha)}(t)f(t)+g(t)f^{(\alpha)}(t),
\quad 0<\alpha<1$.}
\end{itemize}

\begin{example}
Let $f(t)=t^\gamma$. Then
  $f^{(\alpha)}(x)=\Gamma(\gamma+1)\Gamma^{-1}(\gamma+1-\alpha)t^{\gamma-\alpha}$,
  where $0<\alpha<1$ and $\gamma>0$.
\end{example}
\begin{example}
The solution of the fractional differential equation
  $$x^{(\alpha)}(t)=c, \quad x(0)=x_0, \quad c=constant,$$ is $$
  x(t)=\frac{c}{\alpha!}t^{\alpha}+x_0,$$
  with the notation ${\alpha!}:=\Gamma(1+\alpha)$.
\end{example}

The integral with respect to $(dt)^\alpha$ is defined as the
solution of the fractional differential equation
  \begin{equation}\label{int}
  dy=f(x)(dx)^{\alpha},\quad x\geq0,\quad y(0)=y_0,\quad
  0<\alpha\leq 1
  \end{equation}
which is provided by the following result:
  \begin{lemma}
  \label{integral}
  Let $f(t)$ denote a continuous function. The solution of the
  equation \eqref{int} is defined by the equality
 $$ \int_0^tf(\tau)(d\tau)^\alpha=\alpha\int_0^t(t-\tau)^{\alpha-1}f(\tau)d\tau, \quad
  0<\alpha\leq 1.$$
  \end{lemma}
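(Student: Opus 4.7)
The plan is to convert~\eqref{int} into an equivalent Jumarie fractional ODE and then solve it by Laplace transform. By analogy with Example~2 above (the constant case $y^{(\alpha)}=c$ with solution $y(t)=\frac{c}{\alpha!}t^\alpha+y_0$), the symbolic equation $dy=f(x)(dx)^\alpha$ with $y(0)=y_0$ should be read as the Jumarie initial value problem
\begin{equation*}
y^{(\alpha)}(x)=\Gamma(1+\alpha)\,f(x), \qquad y(0)=y_0;
\end{equation*}
this reading is internally consistent, since substituting $f\equiv c'$ recovers the solution $y=c' t^\alpha+y_0$ that Example~2 yields after rescaling.

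Next, I would apply the Laplace transform to both sides and invoke the identity $\mathfrak{L}\{y^{(\alpha)}(t)\}=s^\alpha\mathfrak{L}\{y(t)\}-s^{\alpha-1}y(0)$ recorded in Section~\ref{sec1}. Writing $Y:=\mathfrak{L}\{y\}$ and $F:=\mathfrak{L}\{f\}$, this gives
\begin{equation*}
s^\alpha Y(s)-s^{\alpha-1}y_0=\Gamma(1+\alpha)\,F(s),
\qquad\text{so}\qquad
Y(s)=\frac{y_0}{s}+\alpha\,\frac{\Gamma(\alpha)}{s^\alpha}\,F(s),
\end{equation*}
using $\Gamma(1+\alpha)=\alpha\,\Gamma(\alpha)$. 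Since $\mathfrak{L}\{t^{\alpha-1}\}=\Gamma(\alpha)/s^\alpha$, the convolution theorem then yields, upon inversion,
\begin{equation*}
y(t)-y_0=\alpha\int_0^t(t-\tau)^{\alpha-1}f(\tau)\,d\tau,
\end{equation*}
and identifying $\int_0^t f(\tau)(d\tau)^\alpha$ with $y(t)-y_0$ delivers the asserted equality.

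The main obstacle, in my view, is the first step: justifying the passage from the symbolic equation $dy=f(x)(dx)^\alpha$ to the Jumarie ODE $y^{(\alpha)}=\Gamma(1+\alpha)\,f$. This rests on the convention of Jumarie's framework and is corroborated by Example~2, but a fully rigorous derivation would require spelling out the meaning of the fractional differential $(dx)^\alpha$. Once this translation is granted, the rest is a mechanical application of the Laplace transform identity and the convolution theorem; a minor technical point is to ensure that $f$ (continuous) and the candidate $y$ admit Laplace transforms and that the inversion is legitimate, both of which follow from the standing hypotheses.
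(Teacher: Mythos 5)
The paper gives no proof of Lemma~\ref{integral} at all: it is imported verbatim from Jumarie's calculus (``which is provided by the following result''), so there is nothing of the author's to compare your argument against, and any correct derivation is added value. Taken on its own terms, your proof is sound. The translation of $dy=f(x)(dx)^\alpha$ into $y^{(\alpha)}(x)=\Gamma(1+\alpha)f(x)$ is exactly Jumarie's convention $d^\alpha y\simeq\Gamma(1+\alpha)\,dy$, and besides Example~2 it is corroborated by Example~4 of the paper (solution of $x^{(\alpha)}=f$ equal to $x_0+\Gamma(\alpha)^{-1}\int_0^t(t-\tau)^{\alpha-1}f(\tau)\,d\tau$), which combined with your reading yields precisely the factor $\Gamma(1+\alpha)/\Gamma(\alpha)=\alpha$ in the lemma; the Laplace computation itself is routine and correct. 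Two remarks. First, the integrability worry you raise is easily dispatched: for fixed $t$ both sides of the identity depend only on $f|_{[0,t]}$, so you may replace $f$ by a compactly supported continuous extension before transforming. Second, you can bypass the Laplace transform entirely (and with it the reliance on the transform identity, which the paper also only records without proof): writing $y(t)-y_0=\Gamma(1+\alpha)(I^\alpha f)(t)$ with $I^\alpha$ the Riemann--Liouville fractional integral, the Jumarie derivative of $y$ of order $\alpha\in(0,1)$ is by definition the Riemann--Liouville derivative of $y-y(0)$, and the classical composition rule $D^\alpha I^\alpha f=f$ for continuous $f$ gives $y^{(\alpha)}=\Gamma(1+\alpha)f$ directly. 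That direct verification is shorter and avoids all growth hypotheses; your route is correct but leans on an unproved auxiliary identity and on the translation step you yourself flag, which remains a matter of convention rather than of proof in Jumarie's framework.
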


\begin{example}
 Let $f(t)=1$. Then $\int_0^t(d\tau)^\alpha=t^{\alpha}$,
$0<\alpha\leq 1$.
\end{example}
\begin{example}
The solution of the fractional differential equation
  $$x^{(\alpha)}(t)=f(t), \quad x(0)=x_0$$ is $$
  x(t)=x_0 +\Gamma^{-1}(\alpha)\int_0^t(t-\tau)^{\alpha-1}f(\tau)d\tau.$$
\end{example}
One can easily generalize the previous definitions and results for
functions with a domain $[a,b]$:
$$f^{(\alpha)}(t)=\frac{1}{\Gamma(1-\alpha)}\frac{d}{dt}\int_a^t(t-\tau)^{-\alpha}(f(\tau)-f(a))\,d\tau$$
and
$$\int_a^tf(\tau)(d\tau)^\alpha=\alpha\int_a^t(t-\tau)^{\alpha-1}f(\tau)d\tau.$$
For the discussion to follow, we will need the following formula of
integration by parts:
\begin{equation}\label{int:parts}
\int_a^bu^{(\alpha)}(t)v(t)\, (dt)^\alpha =\alpha! [u(t)v(t)]_a^b
-\int_a^bu(t)v^{(\alpha)}(t)\, (dt)^\alpha,
\end{equation}
where $\alpha!:=\Gamma(1+\alpha)$.


\section{Main Results}\label{sec2}
Let us consider the functional defined by
\begin{equation*}
\mathcal{J}(y)=\int_a^b L(x,y(x),y^{(\alpha)}(x),y(a),y(b)) \,
(dx)^{\alpha},
\end{equation*}
where $L(\cdot,\cdot,\cdot,\cdot,\cdot) \in
C^1([a,b]\times\mathbb{R}^4; \mathbb{R})$ and $x\rightarrow \partial_3L(t)$ has continuous $\alpha$-derivative. The fractional problem of
the calculus of variations under consideration has the form
\begin{equation*}
\mathcal{J}(y) \longrightarrow \text{extr}
\end{equation*}
\begin{equation}\label{Funct1}
(y(a)=y_{a}), \quad (y(b)=y_{b})
\end{equation}
\begin{equation*}
y(\cdot)\in C^0.
\end{equation*}
Using parentheses around the end-point conditions means that the
conditions may or may not be present. \\
Along the work we denote by $\partial_iL$, $i=1,\ldots,5$, the
partial derivative of function $L(\cdot,\cdot,\cdot,\cdot,\cdot)$
with respect to its $i$th argument. \\
The following lemma will be needed in the next subsection.
\begin{lemma}
\label{fundLemma} Let $g$ be a continuous function and assume that
$$\int_a^bg(x)h(x)\, (dx)^\alpha=0$$
for every continuous function $h$ satisfying $h(a)=h(b)=0$. Then $g
\equiv0$.
\end{lemma}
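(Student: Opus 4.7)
The plan is to convert the fractional integral condition into a classical Lebesgue integral condition via Lemma~\ref{integral} and then run the standard bump-function argument that proves the classical fundamental lemma of the calculus of variations. Applying the representation of the fractional integral on $[a,b]$ stated immediately after Lemma~\ref{integral}, the hypothesis is equivalent to
\begin{equation*}
\alpha\int_a^b (b-x)^{\alpha-1}\,g(x)\,h(x)\,dx=0
\end{equation*}
for every continuous $h$ with $h(a)=h(b)=0$. Crucially, the weight $w(x):=(b-x)^{\alpha-1}$ satisfies $w(x)>0$ for all $x\in[a,b)$ (with at most an integrable singularity at $x=b$ when $\alpha<1$), so it plays no active role beyond contributing a strictly positive factor on any compact subinterval of $(a,b)$.

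I would then argue by contradiction. Assume there exists $x_0\in[a,b]$ with $g(x_0)\ne 0$; by continuity of $g$ we may in fact take $x_0\in(a,b)$, and there exist $c<d$ with $[c,d]\subset(a,b)$ on which $g$ has constant sign, say $g(x)>0$ throughout $[c,d]$. Choose the test function
\begin{equation*}
h(x)=
\begin{cases}
(x-c)(d-x), & x\in[c,d],\\
0, & x\in[a,b]\setminus[c,d],
\end{cases}
\end{equation*}
which is continuous on $[a,b]$, satisfies $h(a)=h(b)=0$, is non-negative everywhere, and is strictly positive on $(c,d)$.

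With this choice the product $w(x)g(x)h(x)$ is continuous and non-negative on $[a,b]$, identically zero outside $[c,d]$, and strictly positive on $(c,d)$, hence
\begin{equation*}
\alpha\int_a^b w(x)\,g(x)\,h(x)\,dx=\alpha\int_c^d w(x)\,g(x)\,h(x)\,dx>0,
\end{equation*}
contradicting the assumed vanishing. I do not anticipate any substantive obstacle: the only minor subtlety is building the bump on a compact subinterval strictly inside $(a,b)$ so that the potentially singular weight $w$ stays bounded on the support of $h$, but this is automatic from the construction above.
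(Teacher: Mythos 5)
Your proof is correct and is exactly the argument the paper merely gestures at (its proof is a one-line reference to the standard fundamental lemma): you reduce the fractional integral to the weighted classical integral $\alpha\int_a^b (b-x)^{\alpha-1}g(x)h(x)\,dx$ and run the usual bump-function contradiction. The one genuinely non-classical point, the integrable singularity of the weight $(b-x)^{\alpha-1}$ at $x=b$, is handled properly by supporting the bump in a compact subinterval of $(a,b)$, so nothing is missing.
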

\begin{proof}
Can be done in a similar way as the proof of the standard
fundamental lemma of the calculus of variations (see, \textrm{e.g.},
\cite{Brunt}).
\end{proof}
\subsection{Necessary Conditions}

Next theorem gives necessary optimality conditions for the problem
(\ref{Funct1}).

\begin{theorem}
\label{Theo E-L1} Let $y$ be an extremizer to problem
(\ref{Funct1}). Then, $y$ satisfies the fractional Euler--Lagrange
equation
\begin{equation}
\label{E-L1}
\partial_2L(x,y(x),y^{(\alpha)}(x),y(a),y(b))
=\frac{d^{\alpha}}{dx^{\alpha}}
\partial_3L(x,y(x),y^{(\alpha)}(x),y(a),y(b))
\end{equation}
for all $x\in[a,b]$. Moreover, if $y(a)$ is not specified, then
\begin{equation}\label{new:bca}
\int_a^b \partial_4L(x,y(x),y^{(\alpha)}(x),y(a),y(b)) \, (dx)^{\alpha}
=\alpha!\partial_3L(a,y(a),y^{(\alpha)}(a),y(a),y(b))
\end{equation}
if $y(b)$ is not specified, then
\begin{equation}\label{new:bcb}
\int_a^b \partial_5L(x,y(x),y^{(\alpha)}(x),y(a),y(b)) \, (dx)^{\alpha}
=-\alpha!\partial_3L(b,y(b),y^{(\alpha)}(b),y(a),y(b)).
\end{equation}
\end{theorem}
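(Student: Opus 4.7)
The plan is to follow the classical variational argument, adapted to the Jumarie calculus using the tools already assembled in the paper, namely the integration-by-parts formula \eqref{int:parts}, the Leibniz rule stated for $0<\alpha<1$, and the fundamental Lemma~\ref{fundLemma}.

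First I would embed $y$ in a one-parameter family $y+\varepsilon\eta$, where $\eta\in C^0$ is a continuous variation whose endpoint values $\eta(a)$, $\eta(b)$ are left free when the corresponding endpoint of $y$ is not prescribed, and are set to zero when it is prescribed. Differentiating $\mathcal{J}(y+\varepsilon\eta)$ with respect to $\varepsilon$ at $\varepsilon=0$, using the $C^1$ assumption on $L$ and the linearity of the Jumarie derivative on variations (so that $(y+\varepsilon\eta)^{(\alpha)}=y^{(\alpha)}+\varepsilon\eta^{(\alpha)}$), the first-variation condition becomes
\begin{equation*}
\int_a^b\!\bigl[\partial_2 L\,\eta+\partial_3 L\,\eta^{(\alpha)}+\partial_4 L\,\eta(a)+\partial_5 L\,\eta(b)\bigr](dx)^{\alpha}=0,
\end{equation*}
where $\partial_iL$ is evaluated along $(x,y(x),y^{(\alpha)}(x),y(a),y(b))$.

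Next I would apply the fractional integration-by-parts formula \eqref{int:parts} to the $\partial_3L\,\eta^{(\alpha)}$ term, which, since $x\mapsto\partial_3L$ has a continuous $\alpha$-derivative, transforms the above identity into
\begin{equation*}
\int_a^b\!\Bigl[\partial_2 L-\tfrac{d^{\alpha}}{dx^{\alpha}}\partial_3 L\Bigr]\eta\,(dx)^{\alpha}+\alpha!\bigl[\partial_3 L\,\eta\bigr]_a^b+\eta(a)\!\int_a^b\!\partial_4 L\,(dx)^{\alpha}+\eta(b)\!\int_a^b\!\partial_5 L\,(dx)^{\alpha}=0.
\end{equation*}
Restricting first to admissible variations with $\eta(a)=\eta(b)=0$, the boundary and endpoint-integral terms drop out, and Lemma~\ref{fundLemma} immediately yields the Euler--Lagrange equation \eqref{E-L1}.

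Finally, with \eqref{E-L1} in hand the integral term vanishes identically, and the remaining equality reduces to
\begin{equation*}
\eta(a)\Bigl[\!\int_a^b\!\partial_4 L\,(dx)^{\alpha}-\alpha!\,\partial_3 L(a,\cdot)\Bigr]+\eta(b)\Bigl[\!\int_a^b\!\partial_5 L\,(dx)^{\alpha}+\alpha!\,\partial_3 L(b,\cdot)\Bigr]=0.
\end{equation*}
Choosing $\eta$ with $\eta(b)=0$ and $\eta(a)$ arbitrary (permissible precisely when $y(a)$ is free) delivers \eqref{new:bca}, and symmetrically \eqref{new:bcb} follows. I expect the only delicate point to be justifying that the above manipulations are legitimate inside the Jumarie $(dx)^{\alpha}$ integral: specifically, checking that differentiation under the integral sign is valid (which follows from the $C^1$ regularity of $L$ and the representation of the $(dx)^{\alpha}$ integral as an ordinary Riemann integral via Lemma~\ref{integral}) and that the integration-by-parts formula applies to $u=\eta$, $v=\partial_3L$ with the stated regularity. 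Once these points are in place, the argument is just the classical free-endpoint derivation transported to the Jumarie setting.
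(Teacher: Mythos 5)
Your proposal is correct and follows essentially the same route as the paper's own proof: the same first-variation computation, the same application of the integration-by-parts formula \eqref{int:parts} with $u=\eta$, $v=\partial_3L$, the same restriction to variations vanishing at the endpoints combined with Lemma~\ref{fundLemma} to get \eqref{E-L1}, and the same endpoint argument for \eqref{new:bca}--\eqref{new:bcb}. The only difference is that you make explicit the justification of differentiating under the $(dx)^{\alpha}$ integral (and you correctly keep the factor $\eta(x)$ in the integrated term, which the paper's displayed equation omits by an evident typo), which is a welcome but minor refinement.
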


\begin{proof}
Suppose that $y$ is an extremizer of $\mathcal{J}$ and consider the
value of $\mathcal{J}$ at a nearby function $\tilde{y}=y +
\varepsilon h$, where $\varepsilon\in \R$ is a small parameter and
$h$ is an arbitrary continuous function. We do not require $h(a)=0$
or $h(b)=0$ in case $y(a)$ or $y(b)$, respectively, is free (it is
possible that both are free). Let $j(\varepsilon)=\mathcal{J}(y +
\varepsilon h) $. Then a necessary condition for $y$ to be an
extremizer is given by $j'(0)=0$. Hence,
\begin{equation}
\label{eq:FT} \int_a^b \Bigl[
\partial_2L(\cdot)h(x)+\partial_3L(\cdot)h^{(\alpha)}(x)
+\partial_4L(\cdot)h(a)+\partial_5L(\cdot)h(b)\Bigl](dx)^{\alpha} =0
\, ,
\end{equation}
where $(\cdot) = \left(x,y(x),y^{(\alpha)}(x), y(a),y(b)\right)$.
Using integration by parts (\ref{int:parts}) to the second term we get
\begin{multline}\label{eq:aft:IP}
\int_a^b \Bigl[
\partial_2L(\cdot)-\frac{d^{\alpha}}{dx^{\alpha}}
\partial_3L(\cdot)\Bigl](dx)^{\alpha}
+\left.\left.\alpha!\partial_3L(\cdot)\right|_{x=b}h(b)-\alpha!\partial_3L(\cdot)\right|_{x=a}h(a)\\
+\int_a^b
\Bigl[\partial_4L(\cdot)h(a)+\partial_5L(\cdot)h(b)\Bigl](dx)^{\alpha}=0.
\end{multline}
We first consider functions $h$ such that $h(a) =h(b)=0$. Then, by
the Lemma~\ref{fundLemma} we deduce that
\begin{equation*}
\partial_2L(\cdot)
=\frac{d^{\alpha}}{dx^{\alpha}}
\partial_3L(\cdot)
\end{equation*}
for all $x\in[a,b]$. Therefore, in order for $y$ to be an extremizer
to the problem (\ref{Funct1}), $y$ must be a solution of the
fractional Euler--Lagrange equation (\ref{E-L1}). But if $y$ is a solution of
(\ref{E-L1}), the first integral in expression (\ref{eq:aft:IP})
vanishes, and then condition (\ref{eq:FT}) takes the form
\begin{equation*}
h(b)\Bigl[\int_a^b\partial_5L(\cdot)(dx)^{\alpha}+\alpha!\partial_3L(\cdot)|_{x=b}\Bigl]
+h(a)\Bigl[\int_a^b
\partial_4L(\cdot)dx-\alpha!\partial_3L(\cdot)|_{x=a}\Bigl]=0.
\end{equation*}
If $y(a)=y_{a}$ and $y(b)=y_{b}$ are given in the formulation of
problem (\ref{Funct1}), then the latter equation is trivially
satisfied since $h(a)=h(b)=0$. When $y(b)$ is free, then equation
(\ref{new:bcb}) holds, when $y(a)$ is free, then (\ref{new:bca})
holds, since $h(a)$ or $h(b)$ is, respectively, arbitrary.
\end{proof}

In the case $L$ does not depend on $y(a)$ and $y(b)$, by
Theorem~\ref{Theo E-L1} we obtain the following result.

\begin{corollary}\cite[Theorem~1]{Almeida}
Let $y$ be an extremizer to problem
\begin{equation*}
 \mathcal{J}(y)=\int_a^b L(x,y(x),y^{(\alpha)}(x) )(dx)^{\alpha} \longrightarrow \text{extr}.\\
\end{equation*}
Then, $y$ satisfies the fractional Euler--Lagrange equation
\begin{equation*}
\partial_2L(x,y(x),y^{(\alpha)}(x))=\frac{d^{\alpha}}{dx^{\alpha}}
\partial_3L(x,y(x),y^{(\alpha)}(x))
\end{equation*}
for all $x\in[a,b]$. Moreover, if $y(a)$ is not specified, then
\begin{equation*}
\partial_3L(a,y(a),y^{(\alpha)}(a))=0
\end{equation*}
if $y(b)$ is not specified, then
\begin{equation*}
\partial_3L(b,y(b),y^{(\alpha)}(b))=0
\end{equation*}
\end{corollary}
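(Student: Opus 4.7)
The plan is to obtain this corollary as an immediate specialization of Theorem~\ref{Theo E-L1}. I would view the three-argument Lagrangian $L(x,y(x),y^{(\alpha)}(x))$ as a five-argument Lagrangian $\tilde L(x,y(x),y^{(\alpha)}(x),u,v)$ that happens to be independent of its last two slots $u = y(a)$ and $v = y(b)$. With this identification the regularity hypotheses of Theorem~\ref{Theo E-L1} (namely $\tilde L\in C^1$ and $x\mapsto \partial_3\tilde L$ of class $C^\alpha$) follow from the corresponding hypotheses on $L$, and crucially $\partial_4\tilde L \equiv 0$ and $\partial_5\tilde L \equiv 0$.

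Now I would invoke Theorem~\ref{Theo E-L1} directly. The Euler--Lagrange equation (\ref{E-L1}) reduces verbatim to
\begin{equation*}
\partial_2L(x,y(x),y^{(\alpha)}(x)) = \frac{d^\alpha}{dx^\alpha}\partial_3L(x,y(x),y^{(\alpha)}(x)),
\end{equation*}
since $\partial_2\tilde L$ and $\partial_3\tilde L$ agree with $\partial_2 L$ and $\partial_3 L$ respectively.

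For the natural boundary conditions, I would substitute $\partial_4\tilde L\equiv 0$ into (\ref{new:bca}) and $\partial_5\tilde L\equiv 0$ into (\ref{new:bcb}). The left-hand sides of both equations vanish identically, so what remains is
\begin{equation*}
0 = \alpha!\,\partial_3 L(a,y(a),y^{(\alpha)}(a)) \quad \text{if } y(a) \text{ is free},
\end{equation*}
and
\begin{equation*}
0 = -\alpha!\,\partial_3 L(b,y(b),y^{(\alpha)}(b)) \quad \text{if } y(b) \text{ is free}.
\end{equation*}
Dividing by the nonzero factor $\alpha! = \Gamma(1+\alpha)$ yields the two stated transversality conditions.

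There is essentially no obstacle here; the corollary is purely a bookkeeping consequence of the more general theorem, and the only thing to check is that extending $L$ trivially in two extra variables does not violate any hypothesis of Theorem~\ref{Theo E-L1}, which is immediate.
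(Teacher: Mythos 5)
Your proposal is correct and matches the paper's own treatment: the corollary is stated there precisely as the specialization of Theorem~\ref{Theo E-L1} to a Lagrangian independent of $y(a)$ and $y(b)$, so that $\partial_4L\equiv\partial_5L\equiv 0$ kills the integrals in (\ref{new:bca})--(\ref{new:bcb}) and division by $\alpha!=\Gamma(1+\alpha)\neq 0$ gives the stated transversality conditions.
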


Observe that if $\alpha$ goes to $1$, then the operators
$\frac{d^{\alpha}}{dx^{\alpha}}$, $(dx)^{\alpha}$ could be replaced
with $\frac{d}{dx}$ and $dx$. Thus, in this case we obtain the
corresponding result in the classical context of the calculus of
variations (see \cite[Corollary~1]{MalTor},
\cite[Theorem~2.1]{Cruz}).

\begin{corollary}\cite[Corollary~1]{MalTor}
If $y$ is a local extremizer for
\begin{equation*}
\begin{gathered}
\mathcal{J}(y)=\int_a^b L(x,y(x),
y'(x), y(a),y(b)) \, dx \longrightarrow \text{extr}\\
\quad (y(a)=y_{a}), \quad (y(b)=y_{b}),
\end{gathered}
\end{equation*}
then
\begin{equation*}
\frac{d}{dx}\partial_{3}L(x,y(x), y'(x), y(a),y(b))=
\partial_2L(x,y(x), y'(x), y(a),y(b))
\end{equation*}
for all $x \in [a,b]$. Moreover, if $y(a)$ is free, then
\begin{equation*}
\partial_{3}L(a,y(a), y'(a), y(a),y(b))
=\int_{a}^{b}\partial_{5}L(x,y(x), y'(x), y(a),y(b))dx;
\end{equation*}
and if $y(b)$ is free, then
\begin{equation*}
\partial_{3}L(b,y(b), y'(b), y(a),y(b))
=-\int_{a}^{b}\partial_{6}L(x,y(x), y'(x), y(a),y(b))dx.
\end{equation*}
\end{corollary}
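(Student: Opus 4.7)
The plan is to deduce this corollary as the integer-order limit $\alpha\to 1$ of Theorem~\ref{Theo E-L1}. The paragraph immediately preceding the statement already observes that the operators $\frac{d^{\alpha}}{dx^{\alpha}}$ and $(dx)^{\alpha}$ reduce to $\frac{d}{dx}$ and $dx$ in this limit; in addition, the normalizing factor $\alpha! = \Gamma(1+\alpha)$ tends to $\Gamma(2)=1$, and for $y\in C^1$ the Jumarie derivative $y^{(\alpha)}$ passes to the ordinary derivative $y'$. Substituting these limits into the fractional Euler--Lagrange equation (\ref{E-L1}) yields the classical equation $\partial_2L = \frac{d}{dx}\partial_3L$, and substituting into the natural boundary conditions (\ref{new:bca})--(\ref{new:bcb}) produces the two boundary formulas asserted here for free $y(a)$ and free $y(b)$, after re-indexing the slots so that the two boundary-data derivatives in the paper's $\partial_4,\partial_5$ correspond to $\partial_5,\partial_6$ of the MalTor convention.

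As a self-contained alternative I would simply replay the proof of Theorem~\ref{Theo E-L1} in the classical setting. Set $j(\varepsilon)=\mathcal{J}(y+\varepsilon h)$ for an arbitrary $C^1$ perturbation $h$ (with $h(a)=0$ or $h(b)=0$ imposed exactly when the corresponding endpoint is fixed). Differentiating under the integral and applying classical integration by parts to the $\partial_3L\cdot h'(x)$ term converts $j'(0)=0$ into
\begin{equation*}
\int_a^b\!\Bigl[\partial_2L - \tfrac{d}{dx}\partial_3L\Bigr]h(x)\,dx
+\bigl[\partial_3L\,h\bigr]_a^b
+\int_a^b\!\Bigl[\partial_5L\,h(a)+\partial_6L\,h(b)\Bigr]dx=0.
\end{equation*}
Restricting first to $h$ vanishing at both endpoints and invoking the classical fundamental lemma of the calculus of variations extracts the Euler--Lagrange equation. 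With that equation in hand, the interior integral vanishes for every admissible $h$, leaving the boundary identity
\begin{equation*}
h(a)\Bigl[\textstyle\int_a^b\partial_5L\,dx-\partial_3L|_{x=a}\Bigr]
+h(b)\Bigl[\textstyle\int_a^b\partial_6L\,dx+\partial_3L|_{x=b}\Bigr]=0.
\end{equation*}
Choosing $h$ with $h(a)$ arbitrary (and, independently, $h(b)$ arbitrary) when the corresponding endpoint is free delivers the two stated natural boundary conditions.

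The only genuine technical issue, and the step I expect to need the most care, is justifying the passage $\alpha\to 1$ in the first approach: one must verify that $\alpha\int_a^x(x-\tau)^{\alpha-1}f(\tau)\,d\tau\to\int_a^x f(\tau)\,d\tau$ for continuous $f$ (a standard Abel-kernel computation) and that the Jumarie derivative of a $C^1$ function converges pointwise to its ordinary derivative. Granted these, the reduction is formal, and the direct replay above sidesteps the limit altogether; hence the corollary should pose no real obstacle once Theorem~\ref{Theo E-L1} is in place, and it serves mainly to exhibit consistency of the Jumarie-based framework with the classical result of~\cite{MalTor}.
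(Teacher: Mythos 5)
Your proposal is correct and matches the paper, which offers no separate proof of this corollary: it simply remarks that as $\alpha\to 1$ the operators $\frac{d^{\alpha}}{dx^{\alpha}}$ and $(dx)^{\alpha}$ reduce to $\frac{d}{dx}$ and $dx$, so the statement is the classical specialization of Theorem~\ref{Theo E-L1} (with the slot re-indexing to $\partial_5$, $\partial_6$ inherited from the cited source, as you note). Your self-contained replay of the variational argument is exactly the proof given in the cited reference \cite{MalTor}, so both of your routes are consistent with the paper's treatment.
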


\subsection{Sufficient Conditions}
In this section we prove sufficient conditions for optimality. Similarly to what happens in the
classical calculus of variations, some conditions of convexity
(concavity) are in order.

\begin{definition}
Given a function $L$, we say that $L(\underline x,y,z,t,u)$ is
jointly convex (concave) in $(y,z,t,u)$, if $\partial_i L$ ,
$i=2,\ldots,5$, exist and are continuous and verify the following
condition:
\begin{multline*}
L(x,y+y_1,z+z_1,t+t_1,u+u_1)-L(x,y,z,t,u)\\
\geq (\leq) \partial_2 L(\cdot)y_1+\partial_3 L(\cdot)z_1+
\partial_4 L(\cdot)t_1+\partial_5 L(\cdot)u_1,
\end{multline*}
where $(\cdot)=(x,y,z,t,u)$, for all $(x,y,z,t,u)$,
$(x,y+y_1,z+z_1,t+t_1,u+u_1)$ $\in [a,b]\times\mathbb R^4$.
\end{definition}

\begin{theorem}\label{suff}
Let $L(\underline x,y,z,t,u)$ be a jointly convex (concave) in
$(y,z,t,u)$. If $y_0$ satisfies conditions
(\ref{E-L1})--(\ref{new:bcb}), then $y_0$ is a global minimizer
(maximizer) to problem (\ref{Funct1}).
\end{theorem}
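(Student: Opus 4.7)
The plan is to follow the standard convexity argument from the classical calculus of variations, adapted to Jumarie's fractional setting. Take an arbitrary admissible competitor $y=y_{0}+h$, where $h$ is a continuous variation which must satisfy $h(a)=0$ if $y(a)$ is prescribed and $h(b)=0$ if $y(b)$ is prescribed, but is otherwise free at an unconstrained endpoint. The goal is to show $\mathcal{J}(y)-\mathcal{J}(y_{0})\ge 0$ in the convex case (and the reverse inequality in the concave case).

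First I would write
\begin{equation*}
\mathcal{J}(y)-\mathcal{J}(y_{0})=\int_{a}^{b}\bigl[L(x,y_{0}+h,y_{0}^{(\alpha)}+h^{(\alpha)},y_{0}(a)+h(a),y_{0}(b)+h(b))-L(\cdot)\bigr](dx)^{\alpha},
\end{equation*}
with $(\cdot)=(x,y_{0}(x),y_{0}^{(\alpha)}(x),y_{0}(a),y_{0}(b))$, and apply the joint convexity inequality of the Definition pointwise in $x$ to bound this from below by
\begin{equation*}
\int_{a}^{b}\bigl[\partial_{2}L(\cdot)h(x)+\partial_{3}L(\cdot)h^{(\alpha)}(x)+\partial_{4}L(\cdot)h(a)+\partial_{5}L(\cdot)h(b)\bigr](dx)^{\alpha}.
\end{equation*}

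Next I would apply the fractional integration by parts formula \eqref{int:parts} to the $\partial_{3}L(\cdot)h^{(\alpha)}$ term, producing the boundary contribution $\alpha!\,[\partial_{3}L(\cdot)\,h]_{a}^{b}$ and an interior term $-\int_{a}^{b}\tfrac{d^{\alpha}}{dx^{\alpha}}\partial_{3}L(\cdot)\,h(x)\,(dx)^{\alpha}$. Combining with the $\partial_{2}L(\cdot)h$ term and invoking the Euler--Lagrange equation \eqref{E-L1} makes the entire interior integral vanish. What remains is
\begin{equation*}
h(b)\Bigl[\int_{a}^{b}\partial_{5}L(\cdot)(dx)^{\alpha}+\alpha!\partial_{3}L(\cdot)\big|_{x=b}\Bigr]+h(a)\Bigl[\int_{a}^{b}\partial_{4}L(\cdot)(dx)^{\alpha}-\alpha!\partial_{3}L(\cdot)\big|_{x=a}\Bigr],
\end{equation*}
where I have used that $h(a)$ and $h(b)$ are constants to pull them out of the integrals against $\partial_{4}L$ and $\partial_{5}L$.

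Finally I would argue endpoint by endpoint. If $y(a)=y_{a}$ is prescribed then $h(a)=0$ and its bracket drops out; if $y(a)$ is free, then by hypothesis \eqref{new:bca} the bracket itself vanishes. The same dichotomy, using \eqref{new:bcb}, handles the $b$-endpoint. Hence $\mathcal{J}(y)-\mathcal{J}(y_{0})\ge 0$ for every admissible $y$, which gives the global minimum; the concave case is identical with reversed inequalities, yielding a global maximum. I do not foresee a serious obstacle: the only nontrivial point is keeping track of which boundary terms survive in each of the four admissibility regimes (both endpoints fixed, one free, the other free, both free), and verifying that the signs in \eqref{new:bca}--\eqref{new:bcb} are exactly those needed to cancel the boundary contribution produced by the fractional integration by parts.
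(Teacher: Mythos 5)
Your proposal is correct and follows essentially the same route as the paper's own proof: apply the joint convexity inequality under the $(dx)^{\alpha}$ integral, integrate by parts via \eqref{int:parts}, and let the Euler--Lagrange equation \eqref{E-L1} kill the interior term while \eqref{new:bca}--\eqref{new:bcb} (or the vanishing of $h$ at a prescribed endpoint) kill the boundary terms. If anything, your endpoint-by-endpoint case analysis is slightly more explicit than the paper's, which simply says to proceed analogously to the proof of Theorem~\ref{Theo E-L1}.
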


\begin{proof}
We shall give the proof for the convex case. Since $L$ is jointly
convex in $(y,z,t,u)$ for any continuous function $y_0+h$, we have
\begin{multline*}
\mathcal{J}(y_0+h)-\mathcal{J}(y_0)
= \int_a^b \Bigl[L(x,y_0(x) + h(x),y^{(\alpha)}_0(x) +
h^{(\alpha)}(x),y_0(a) + h(a),\Bigl.\\
y_0(b) + h(b))\Bigl.-L(x,y_0(x),y^{(\alpha)}_0(x),y_0(a),y_0(b))\Bigl](dx)^{\alpha}\\
\geq \int_a^b \Bigl[
\partial_2L(\cdot)h(x)+\partial_3L(\cdot)h^{(\alpha)}(x)+\partial_4L(\cdot)h(a)\Bigl.
\Bigl.+\partial_5L(\cdot)h(b)\Bigl](dx)^{\alpha}
\end{multline*}
where $(\cdot) = \left(x,y_0(x),y^{(\alpha)}_0(x),
y_0(a),y_0(b)\right)$. We can now proceed analogously to the proof
of Theorem~\ref{Theo E-L1}. As the result we get
\begin{multline*}
\mathcal{J}(y_0+h)-\mathcal{J}(y_0)
\geq \int_a^b \Bigl[
\partial_2L(\cdot)-\frac{d^{\alpha}}{dx^{\alpha}}
\partial_3L(\cdot)\Bigl](dx)^{\alpha}\\
h(b)\Bigl[\int_a^b\partial_5L(\cdot)(dx)^{\alpha}+\alpha!\partial_3L(\cdot)|_{x=b}\Bigl]
+h(a)\Bigl[\int_a^b
\partial_4L(\cdot)dx-\alpha!\partial_3L(\cdot)|_{x=a}\Bigl]=0,
\end{multline*}
since $y_0$ satisfies conditions (\ref{E-L1})--(\ref{new:bcb}).
Therefore, we obtain $\mathcal{J}(y_0+h)\geq \mathcal{J}(y_0)$.
\end{proof}
\subsection{Examples}
We shall provide examples in order to illustrate our main results.

\begin{example}\label{ex}
Consider the following problem
\begin{multline*}
\mathcal{J}(y)=\int_0^1\left\{\Bigl[
\frac{x^\alpha}{\Gamma(\alpha+1)}(y^{(\alpha)})^2-2x^\alpha
y^{(\alpha)}\Bigl]^2\right.\\
\left.+(y(0)-1)^2+(y(1)-2)^2\right\}\,(dx)^\alpha \longrightarrow
\text{extr}.
\end{multline*}
The Euler--Lagrange equation associated to this problem is
\begin{equation}\label{ELexample}
\frac{d^\alpha}{dx^\alpha}\left(2\left[ \frac{x^\alpha}{\Gamma(\alpha+1)}(y^{(\alpha)})^2-2x^\alpha y^{(\alpha)} \right]\right.
\left.\cdot \left[
\frac{2x^\alpha}{\Gamma(\alpha+1)}y^{(\alpha)}-2x^\alpha
\right]\right)=0.
\end{equation}
Let $y=x^\alpha+b$, where $b\in \R$. Since
$y^{(\alpha)}=\Gamma(\alpha+1)$, it follows that $y$ is a solution
of (\ref{ELexample}). In order to determine $b$ we use the
generalized natural boundary conditions
(\ref{new:bca})--(\ref{new:bcb}), which can be written for this
problem as
\begin{equation*}
\int_0^1 (y(0)-1)(dx)^{\alpha}=0,
\end{equation*}
\begin{equation*}
\int_0^1(y(1)-2)(dx)^{\alpha}=0.
\end{equation*}
Hence, $\tilde{y}=x^\alpha+1$ is a candidate solution. We remark
that the $\tilde{y}$ is not differentiable in $[0,1]$.
\end{example}

\begin{example}
Consider the following problem
\begin{equation}
\label{EX} \mathcal{J}(y)=\int_0^1 \left[(y^{(\alpha)}(x))^2+\gamma
y^2(0)\right.
+\left.\lambda(y(1)-1)^2\right](dx)^{\alpha}\longrightarrow
\text{min},
\end{equation}
where $\gamma,\lambda\in \R^+$. For this problem, the fractional
Euler--Lagrange equation and the generalized natural boundary
conditions (see Theorem~\ref{Theo E-L1}) are given, respectively, as
\begin{equation}\label{Ex:el}
2\frac{d^{\alpha}}{dx^{\alpha}}y^{(\alpha)}(x)=0,
\end{equation}
\begin{equation}\label{Ex:ba}
\int_0^1\gamma y(0)(dx)^{\alpha}=\alpha !y^{(\alpha)}(0),
\end{equation}
\begin{equation}\label{Ex:bb}
\int_0^1\lambda (y(1)-1)(dx)^{\alpha}=-\alpha !y^{(\alpha)}(1).
\end{equation}
Solving equations (\ref{Ex:el})--(\ref{Ex:bb}) we obtain that
\begin{equation*}
\bar{y}(x)=\frac{\gamma \lambda\alpha!}{\gamma \lambda
+(\alpha!)^2(\lambda
+\gamma)}x^{\alpha}+\frac{(\alpha!)^2\lambda}{\gamma \lambda
+(\alpha!)^2(\lambda +\gamma)}
\end{equation*}
is a candidate for minimizer. Observe that problem (\ref{EX})
satisfies assumptions of Theorem~\ref{suff}. Therefore $\bar{y}$ is
a global minimizer to this problem.
We note that when $\alpha$ goes
to $1$ problem (\ref{EX}) tends to
\begin{equation*}
 \mathcal{K}(y)=\int_0^1 \left[(y'(x))^2+\gamma
y^2(0)\right.\\
\left.+\lambda(y(1)-1)^2\right]dx\longrightarrow \min.
\end{equation*}
with the solution
\begin{equation*}
y(x)=\frac{\gamma \lambda}{\gamma \lambda +\lambda
+\gamma}x+\frac{\lambda}{\gamma \lambda +\lambda +\gamma}.
\end{equation*}
\end{example}


\section{Conclusion}\label{sec3}

In recent years fractional calculus has played an important role in
various fields such as mechanics, electricity, chemistry, biology,
economics, modeling, identification, control theory and signal
processing (see, \textrm{e.g.},
\cite{Ric:Del:Hold,Baleanu5,Debnath,Kolwankar,Machado,Ortigueira,Ross}).
The fractional operators are non-local, therefore they are suitable
for constructing models possessing memory. This gives several
possible applications of the FCV, \textrm{e.g.}, in describing
non-local properties of physical systems in mechanics (see,
\textrm{e.g.}, \cite{Carpinteri,Klimek,Rabei2}) or
electrodynamics (see, \textrm{e.g.}, \cite{Baleanu3,Tarasov}). The
Jumarie fractional derivative is quite suitable to describe dynamics
evolving in a space which exhibit coarse-grained phenomenon. When
the point in this space is not infinitely thin but rather a
thickness, then it would be better to replace $dx$ by $(dx)^\alpha$,
$0 < \alpha < 1$, where $\alpha$ characterizes the grade of the
phenomenon. The fractal feature of the space is transported on time,
and so both space and time are fractal. Thus, the increment of time
of the dynamics of the system is not $dx$ but $(dx)^\alpha$. In this
note we generalize some previous results of the FCV (which are
defined in terms of the Jumarie fractional derivatives and
integrals) by proving optimality conditions for problems of FCV with
a Lagrangian density depending on the free end-points. The advantage
of using the Jumarie fractional derivative lies in the fact that
this derivative is defined for continuous functions, non-differentiable (see, Example~\ref{ex}). Note that the integrand in
problem (\ref{Funct1}) depends upon the a priori unknown final
values $y(a)$ and $y(b)$. The present paper indicates
how such problems may be solved.


\begin{acknowledgement}
This work is partially supported by the \emph{Portuguese Foundation
for Science and Technology} (FCT) through the \emph{Systems and
Control Group} of the R\&D Unit CIDMA, and partially by BUT Grant S/WI/00/2011.
The author is grateful to Delfim F. M. Torres for
inspiring discussions and useful comments.
\end{acknowledgement}


\end{document}